\theoremstyle{plain}
\newtheorem{Thm}{Theorem}[section]
\newtheorem{Cor}[Thm]{Corollary}
\newtheorem{Prop}[Thm]{Proposition}
\theoremstyle{definition}
\newtheorem{Ex}[Thm]{Example}
\theoremstyle{remark}
\title[Fluid Flows with Straight Streamlines]{A Uniqueness Theorem for Incompressible Fluid Flows with Straight Streamlines}
\author{Brendan Guilfoyle}
\address{Brendan Guilfoyle\\
          School of STEM \\
          Munster Technological University\\
          Tralee  \\
          Co. Kerry \\
          Ireland.}
\email{brendan.guilfoyle@mtu.ie}
\date{\today}
\begin{document}
\begin{abstract}
    It is proven that the only incompressible Euler fluid flows with fixed straight streamlines are those generated by the normal lines to a round sphere, a circular cylinder or a flat plane, the fluid flow being that of a point source, a line source or a plane source at infinity, respectively.
    
    The proof uses the local differential geometry of oriented line congruences to integrate the Euler equations explicitly. 
\end{abstract}
\maketitle

In Newtonian gravitational theory, static fields whose lines of force are straight lines must be generated by the normal lines to either a round sphere, a circular cylinder or a flat plane. 

A similar situation holds in general relativity, where both static vaccuum and Weyl-type electrostatic gravitational fields with geodesic lines of force are generated by spheres, cylinders and planes, although new non-Newtonian solutions exist in both cases \cite{Das} \cite{g99} \cite{g00}.

Incompressible Euler flows model the non-relativistic hydrodynamics of fluids with no internal friction. They consist of a time-varying vector field $V$ on ${\mathbb R}^3$ called the {\em fluid velocity}, together with a time-varying function $p:{\mathbb R}^3\rightarrow {\mathbb R}$ called the {\it pressure}. The {\it streamlines} of the fluid are the integral curves of the fluid velocity, which in general evolve in time.

The Euler equations for an incompressible fluid in ${\mathbb R}^3$ are
\begin{equation}\label{e:euler1}
    \frac{\partial}{\partial t}V+\nabla_VV=-\nabla p
\end{equation}
\begin{equation}\label{e:euler2}
    \nabla\cdot V=0,
\end{equation}
where $\nabla$ is the Levi-Civita connection of the flat Euclidean metric. The second equation, the incompressibility condition, states that the fluid velocity vector is divergence-free. For foundational work on fluid flows see for example \cite{batch67}. For a modern overview of the Euler equations from a variety of perspectives see \cite{Constantin} \cite{gibbon08} and references therein.

The purpose of this paper is to prove:

\vspace{0.1in}
\noindent{\bf Main Theorem}:

{\it The only incompressible Euler fluid flows with fixed straight streamlines are the solutions generated by the normals to either a round sphere, a circular cylinder or a flat plane, the fluid flow being that of a point source, a line source or a plane source at infinity, respectively.
}

\vspace{0.1in}

This is proven as follows. The condition that the streamlines are straight implies that there exists a 2-parameter family of oriented lines (an {\it oriented line congruence}) to which the velocity vector is everywhere tangent. Using a special coordinate system fitted to the line congruence the Euler equations are explicitly integrated to yield the three solutions. 

 The trichotomy arises from the possible rank of the map that takes an oriented line to its direction, restricted to the line congruence of the fluid flow. This rank can either be two, one or zero, leading ultimately to the normals of a round sphere, a circular cylinder or a flat plane, respectively. 
 
The next section introduces some background on the local geometry of oriented line congruences. Further details can be found in \cite{gk08b}.

The three fluid solutions are discussed in Examples \ref{ex:sph}, \ref{ex:cyl} and  \ref{ex:pln} of Section \ref{s:3sols}. In Section \ref{s:inc} it is shown how every line congruence has a family of divergence-free vector fields tangent to it. These vector fields have singularities at the focal set of the line congruence. 

Having solved equation (\ref{e:euler2}) we turn to solving equations (\ref{e:euler1}). At a point in the fluid flow, the rank of the associated oriented line congruence is two, one or zero. Section \ref{s:rk2} contains the proof that rank two straight streamlines must be the normals to a round sphere. This is first proven for steady and then for non-steady flows.

Section \ref{s:rk1} contains the proof that rank one straight streamlines must be the normals to a circular cylinder. This is again first proven for steady and then for non-steady flows. The final section contains the proof of the rank zero case.

The Main Theorem is also likely to hold for the full incompressible Navier-Stokes equations \cite{constantinfioa}, as the introduction of dissipative effects would seem to make straight streamlines even less likely. 

On the other hand, given the non-Newtonian examples alluded to earlier, the relativistic Euler equations \cite{rezolla} may well admit solutions with geodesic streamlines that are not of the three above classes.

\vspace{0.1in}

\section{Oriented Line Congruences}\label{s:lc}

An {\em oriented line congruence} is a 2-parameter family of oriented lines in ${\mathbb R}^3$, or, equivalently, a surface $\Sigma$ in the space ${\mathbb L}({\mathbb R}^3)$ of all oriented lines.  The 4-manifold ${\mathbb L}({\mathbb R}^3)$ can be identified with the total space of the tangent bundle to the 2-sphere, $TS^2$. Thus it has a natural bundle structure $\pi:{\mathbb L}({\mathbb R}^3)\rightarrow S^2$, taking an oriented line to its direction. For semantic reasons, on occasion we drop the word oriented.

A oriented line congruence $\Sigma$ is {\em graphical} if the projection $\pi$ restricted to $\Sigma$ has rank two or, equivalently, it arises as the graph of a local section of this bundle. 

Taking the complex coordinate $\xi$ on $S^2$ given by stereographic projection from the North pole, a rank two oriented line congruence is given local by a map $\xi\mapsto(\xi,\eta=F(\xi,\bar{\xi}))$, where $\eta$ is a complex fibre coordinate and $F$ a complex function \cite{gk08b}. Here  $(\xi,\eta)\in{\mathbb C}^2$ are local coordinates on ${\mathbb L}({\mathbb R}^3)=TS^2$ minus the fibre over the South pole.

For such an oriented line congruence, define the shear $\sigma$, divergence $\theta$ and twist $\lambda$ by
\begin{equation}\label{e:spinco_rk2}
\sigma=-\frac{\partial \bar{F}}{\partial \xi}, \qquad \qquad \rho=\theta+i\lambda=(1+\xi\bar{\xi})^2\frac{\partial}{\partial \xi}\left(\frac{F}{(1+\xi\bar{\xi})^2}\right).
\end{equation}

On the other hand, a rank one oriented line congruence can be parameterized by $(u,v)\in{\mathbb R}^2$ via $(u,v)\mapsto(\xi(u),\eta(u,v))$. For such line congruences, define the real quantity
\begin{equation}\label{e:beta}
\beta=\frac{(\partial_v\eta\partial_u\bar{\eta}-\partial_u\eta\partial_v \bar{\eta})(1+\xi\bar{\xi})-2(\bar{\eta}\xi\partial_u\bar{\xi}\partial_v\eta-\eta\bar{\xi}\partial_u{\xi}\partial_v\bar{\eta})}{(\partial_v\eta\partial_u\bar{\xi}-\partial_v\bar{\eta}\partial_u\xi))(1+\xi\bar{\xi})}.
\end{equation}
A rank zero oriented line congruence consists of all oriented lines with a given direction $\xi_0\in S^2$, and are thus normal to a flat plane in ${\mathbb R}^3$. In terms of local coordinates, it can be given by
$(u,v)\mapsto(\xi_0,\eta=u+iv)$.

To connect with Euclidean 3-space with flat coordinates $(x^1,x^2,x^3)$, use the map  $\Phi:{\mathbb L}({\mathbb R}^3)\times{\mathbb R}\rightarrow{\mathbb R}^3$ that takes an oriented line $(\xi,\eta)$ and a number $r$, to the point $\Phi((\xi,\eta),r)\in{\mathbb R}^3$ which lies on the oriented line at an oriented distance $r$ from the point closest to the origin:
\begin{equation}\label{e:Phi}
z=x^1+ix^2=\frac{2(\eta-\bar{\xi}^2\bar{\eta})}{(1+\xi\bar{\xi})^2}+\frac{2\xi}{1+\xi\bar{\xi}}r,
\qquad\qquad
x^3=-\frac{2(\xi\bar{\eta}+\bar{\xi}\eta)}{(1+\xi\bar{\xi})^2}+\frac{1-\xi\bar{\xi}}{1+\xi\bar{\xi}}r.
\end{equation}
The distance from a point $(z,\bar{z},x^3)$ on the oriented line with direction $\xi$ to the point on the line closest to the origin is
\begin{equation}\label{e:r}
r=\frac{z\bar{\xi}+\bar{z}\xi+x^3(1-\xi\bar{\xi})}{1+\xi\bar{\xi}}.
\end{equation}

Given an oriented line congruence $\Sigma$, we can locally parameterize ${\mathbb R}^3$ by $U\times{\mathbb R}$, for $U\subset \Sigma$, using the parameter $r$ along each line. For rank two oriented line congruences discussed above, the coordinates are $(\xi,\bar{\xi},r)$, while for rank one and zero congruences they are $(u,v,r)$.

For these coordinate systems we must stay away from the focal set of the line congruence (if it has one) - which is at most two points on each line \cite{gk08b}. 

Oriented line congruences without focal points foliate ${\mathbb R}^3$ \cite{salvai09} and, aside from the parallel line case, are twisting everywhere. We will show that only the parallel case arises as straight streamlines of incompressible Euler fluid flows without focal points. For the other solutions, the focal points are singularities of the velocity vector and can be interpreted as fluid sources or sinks.

\vspace{0.1in}

\section{The Three Solutions}\label{s:3sols}

Consider a flow with straight streamlines. That is, at each point the fluid velocity $V$ is tangent to an oriented line in a 2-parameter family of oriented lines. An Euler flow is said to be {\em steady} if the fluid velocity and pressure are independent of time $t$. 

The following examples are the canonical solutions of the Euler equations with straight streamlines.  

\vspace{0.1in}
\begin{Ex}\label{ex:sph}
The set of normals to a round sphere generates a solution of the Euler equations with the fluid velocity vector and pressure given by
\[
    V=\frac{H(t)}{r^2}\frac{\partial}{\partial r}  \qquad\qquad  p=p_0-\frac{H^2}{2r^4}+\frac{\dot{H}}{r},
\]
for constant $p_0$, where $r$ is the distance to the centre of the sphere and $H$ a free function of time. The solution is steady if $H$ is constant.

The centre of the sphere is a singularity for the velocity and pressure, and the flow can be interpreted as a point source or sink. As $r\rightarrow\infty$ the fluid velocity goes to zero while the pressure becomes constant.

The associated rank two oriented line congruence is $\eta={\textstyle{\frac{1}{2}}}(z_0-2t_0\xi-\bar{z}_0\xi^2)$ where $(z_0,t_0)\in {\mathbb C}\times{\mathbb R}={\mathbb R}^3$ is the centre of the sphere. By direct computation using equations (\ref{e:spinco_rk2}) one finds that $\sigma=\rho=0$.

\end{Ex}
\vspace{0.1in}
\begin{Ex}\label{ex:cyl}
The set of normals to a circular cylinder generates a solution of the Euler equations with fluid velocity vector and pressure given by
\[
V=\frac{H(t)}{r}\frac{\partial}{\partial r}  \qquad\qquad  p=p_0-\frac{H^2}{2r^2}-\dot{H}\ln |r|,
\]
for constants $p_0$, where $r$ is the distance to the axis of symmetry of the cylinder and $H$ a free function of time. The flow is steady for $H$ constant.

The axis of symmetry is a singularity for the velocity and pressure, and the flow can be interpreted as a line source or sink. As $r\rightarrow\infty$ the fluid velocity goes to zero but only in the steady case is the pressure bounded.

After a suitable rotation and translation, the singularity can be lined up with the $x^2$ axis in ${\mathbb R}^3$, and the associated rank one oriented line congruence is $\xi=u,\eta=iv$. The function $\beta$ given by equation (\ref{e:beta}) then turns out to be zero.

\end{Ex}
\vspace{0.1in}
\begin{Ex}\label{ex:pln}
The set of normals to a flat plane generates a solution of the Euler equations and the fluid velocity vector and pressure are
\[
V=\left(H(t)+K(u,v))\right)\frac{\partial}{\partial r}  \qquad\qquad  p=p_0-\dot{H}r,
\]
for constant $p_0$, where $r$ is the distance to the plane and $u,v$ are parameters on the plane. The steady solution is finite, indeed constant, everywhere, while the pressure of the non-steady solution grows linearly at infinity.  This can be interpreted as a plane source at infinity.

Choosing the plane to be the $x^1x^2-$plane, the associated rank zero oriented congruence is simply $\xi=0,\eta=(u+iv)/2$.

\end{Ex}
\vspace{0.1in}

\section{Incompressibility}\label{s:inc}

The divergence-free condition (\ref{e:euler2}) models incompressible fluid flow. Every line congruence admits a family of tangent vectors which are divergence-free - one simply scales the unit tangent vector at each point by a suitable factor.

\vspace{0.1in}
\begin{Prop}
Let $\Sigma$ be a rank two line congruence given by $\eta=F(\xi,\bar{\xi})$. Then the vector field
\begin{equation}\label{e:rk2dv_free}
V=\frac{H(\xi,\bar{\xi})}{(r+\theta)^2+\lambda^2-|\sigma|^2}\frac{\partial}{\partial r},
\end{equation}
is divergence free for any real function $H$, where $\theta,\lambda$ and $\sigma$ are given by equations (\ref{e:spinco_rk2}).
\end{Prop}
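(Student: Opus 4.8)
The plan is to compute the divergence of the radially-scaled vector field directly in the coordinate system $(\xi,\bar\xi,r)$ adapted to $\Sigma$, using the expression for the flat metric pulled back by the map $\Phi$ of equation (\ref{e:Phi}). First I would determine the induced metric: from (\ref{e:Phi}) the push-forward of $\partial/\partial r$ is the Euclidean unit vector along the oriented line (since $r$ is arclength), so $g_{rr}=1$, and moreover $\partial/\partial r$ is orthogonal to $\partial/\partial\xi$ and $\partial/\partial\bar\xi$ (the distance $r$ being measured from the foot of the perpendicular from the origin, the $r$-curves cross the orthogonal surfaces to the congruence). Hence in these coordinates the metric is block-diagonal, $g = dr^2 + g_{a\bar b}\,d\xi^a d\xi^{\bar b}$ with $a,b$ ranging over the $S^2$-index, and the only $r$-dependence sits in the $2\times 2$ block. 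A direct differentiation of (\ref{e:Phi}) shows that this block is a quadratic polynomial in $r$; collecting terms and using the definitions (\ref{e:spinco_rk2}) one finds
\[
\det\big(g_{a\bar b}\big) = C(\xi,\bar\xi)\,\big[(r+\theta)^2+\lambda^2-|\sigma|^2\big]^2,
\]
for some positive factor $C$ independent of $r$ — this is exactly the classical focal-distance factorization for line congruences, and it is where the combination $(r+\theta)^2+\lambda^2-|\sigma|^2$ comes from (its two roots in $r$ being the two focal points mentioned in Section \ref{s:lc}).

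With this in hand the computation is immediate. For a vector field $V = f\,\partial/\partial r$ with $f$ depending on all coordinates, the divergence formula $\nabla\cdot V = \tfrac{1}{\sqrt{g}}\,\partial_\mu(\sqrt{g}\,V^\mu)$ gives $\nabla\cdot V = \tfrac{1}{\sqrt{g}}\,\partial_r(\sqrt{g}\,f)$, since $V$ has only an $r$-component. Writing $\sqrt{g} = \sqrt{C}\cdot\big[(r+\theta)^2+\lambda^2-|\sigma|^2\big]$ (up to the constant conformal factor from the round-sphere metric, which is $r$-independent), and taking $f = H(\xi,\bar\xi)\big/\big[(r+\theta)^2+\lambda^2-|\sigma|^2\big]$, the product $\sqrt{g}\,f$ is independent of $r$. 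Therefore $\partial_r(\sqrt{g}\,f)=0$ and $\nabla\cdot V = 0$, which is the claim.

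The main obstacle is establishing the determinant factorization cleanly: one must differentiate the somewhat intricate expressions (\ref{e:Phi}) for $z,\bar z,x^3$ with respect to $\xi$ and $\bar\xi$, assemble the Hermitian $2\times2$ matrix $g_{a\bar b}(r)$, and verify both that $\partial/\partial r$ is genuinely orthogonal to the $\xi$-directions and that the determinant is a perfect square of the quoted quadratic in $r$ times an $r$-independent factor. This is the standard fact that the focal points of a congruence occur where $(r+\theta)^2+\lambda^2-|\sigma|^2=0$ (see \cite{gk08b}), so the computation is routine in principle, but it is the one place real work is needed; everything after it is a one-line divergence calculation. I would organize the write-up so that the orthogonality and the factorization are stated first (citing \cite{gk08b} for the latter if a full derivation is not desired), and then conclude in a single line.
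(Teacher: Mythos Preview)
Your overall strategy is the same as the paper's --- pull the flat metric back via $\Phi$ to congruence coordinates and compute the divergence there --- and the endgame is correct: once you know $\sqrt{\det g}$ is an $r$-independent factor times $(r+\theta)^2+\lambda^2-|\sigma|^2$, the one-line computation $\nabla\cdot V=\tfrac{1}{\sqrt g}\,\partial_r(\sqrt g\,V^r)=0$ finishes it. The paper phrases this with Christoffel symbols rather than the $\sqrt g$ formula, but these are equivalent.

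There is, however, a genuine error in your justification of the determinant factorization. You assert that $\partial/\partial r$ is orthogonal to $\partial/\partial\xi$ and $\partial/\partial\bar\xi$, arguing that ``the $r$-curves cross the orthogonal surfaces to the congruence.'' But a line congruence admits orthogonal surfaces only when the twist $\lambda$ vanishes (this is exactly the Lagrangian condition discussed later in the paper), so for generic $\Sigma$ no such surfaces exist and the metric is \emph{not} block-diagonal: writing $P=Q+rT$ one finds $g_{r\xi}=\langle\partial_\xi Q,T\rangle$, which is $r$-independent but not zero in general. Consequently you cannot read off $\det g$ as the determinant of the $2\times2$ block alone.

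The repair is easy and you almost say it yourself: work instead with the Jacobian of $\Phi$. Since $\partial_r P=T$ is unit, $|\det J|$ equals the area of the parallelogram spanned by the projections of $\partial_\xi P,\partial_{\bar\xi}P$ onto $T^\perp$; these projections are affine in $r$ (because $\partial_\xi T\perp T$), so $|\det J|$ is quadratic in $r$, and it is this quadratic that equals $(r+\theta)^2+\lambda^2-|\sigma|^2$ up to an $r$-independent factor --- precisely the focal-set statement from \cite{gk08b}. Then $\sqrt{\det g}=|\det J|$ gives what you need, without any orthogonality hypothesis. I would drop the block-diagonal claim entirely and go straight to this Jacobian argument.
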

\begin{proof}
Change from Euclidean coordinates $(z=x^1+ix^2,x^3)$ to congruence coordinates $(\xi,\bar{\xi},r)$ via equations (\ref{e:Phi}) with $\eta=F(\xi,\bar{\xi})$.
Pull back the flat metric in congruence coordinates and compute the divergence
\[
\nabla\cdot V=\nabla_k V^k=\frac{\partial V^k}{\partial x^k}+\Gamma_{kl}^kV^l,
\]
 with $ V^\xi=0$.  The resulting divergence-free condition can be integrated to yield the stated result. 
\end{proof}
\vspace{0.1in}

Note that this vector field blows up at the points where
\[
(r+\theta)^2+\lambda^2-|\sigma|^2=0,
\]
which is exactly the focal set of the line congruence \cite{gk08b}. 

\vspace{0.1in}
\begin{Prop}
Let $\Sigma$ be a rank one line congruence. Then the vector field
\begin{equation}\label{e:rk1dv_free}
V=\frac{H(u,v)}{r+\beta}\frac{\partial}{\partial r},
\end{equation}
is divergence free for any function $H$, where $\beta$ is given by equation (\ref{e:beta}).
\end{Prop}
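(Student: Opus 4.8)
The plan is to mirror the proof of the rank two Proposition. First I would switch from Euclidean coordinates $(z=x^1+ix^2,x^3)$ to congruence coordinates $(u,v,r)$ by substituting the rank one parameterization $(u,v)\mapsto(\xi(u),\eta(u,v))$ into equations (\ref{e:Phi}). Since $\xi$ depends only on $u$, this substitution is a genuine change of coordinates away from the focal set, and the Jacobian can be computed explicitly. I would then pull back the flat Euclidean metric to obtain $g_{ij}$ in the coordinates $(u,v,r)$; the key observation is that $\partial/\partial r$ is a unit vector and that the $r$-dependence of the metric components is polynomial, coming entirely from the linear-in-$r$ terms in (\ref{e:Phi}).

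Next I would compute the divergence of $V = \Omega(u,v,r)\,\partial/\partial r$ using the formula $\nabla\cdot V = \frac{1}{\sqrt{|g|}}\partial_k(\sqrt{|g|}\,V^k)$, which here reduces to $\frac{1}{\sqrt{|g|}}\partial_r(\sqrt{|g|}\,\Omega)$ since the only nonzero component is $V^r=\Omega$. Setting this to zero gives $\partial_r(\sqrt{|g|}\,\Omega)=0$, so $\Omega = H(u,v)/\sqrt{|g|}$ for an arbitrary function $H$ of the transverse parameters. The remaining task is to evaluate $\sqrt{|g|}$ and show it is proportional to $r+\beta$ with $\beta$ exactly the expression in (\ref{e:beta}). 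Because the metric components are at most quadratic in $r$ and $\partial/\partial r$ is unit-length, $|g|$ works out to be a perfect square of something linear in $r$; extracting its affine part and comparing with (\ref{e:beta}) is the crux.

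The main obstacle is the bookkeeping in identifying the constant term of the linear factor with $\beta$: one must carefully track the combination of $\partial_u\xi,\partial_v\eta$ and their conjugates, including the $\xi\bar\xi$ weight factors, and check that the numerator and denominator in (\ref{e:beta}) emerge correctly from $\det g$. It is essentially the rank one analogue of the computation $(r+\theta)^2+\lambda^2-|\sigma|^2$ appearing in (\ref{e:rk2dv_free}), except that rank one degeneracy collapses the quadratic to a single linear factor. Once $\sqrt{|g|}=C(u,v)(r+\beta)$ is established, absorbing the nonvanishing factor $C(u,v)$ into the arbitrary function $H$ completes the proof.

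\begin{proof}
Change from Euclidean coordinates $(z=x^1+ix^2,x^3)$ to congruence coordinates $(u,v,r)$ using equations (\ref{e:Phi}) with the rank one parameterization $\xi=\xi(u)$, $\eta=\eta(u,v)$. Pull back the flat metric to these coordinates. Since the only nonvanishing component of $V$ is $V^r$, the divergence-free condition $\nabla\cdot V=0$ reduces to $\partial_r(\sqrt{|g|}\,V^r)=0$. A direct computation of $\sqrt{|g|}$ shows it is a nonzero function of $(u,v)$ times $(r+\beta)$, with $\beta$ given by equation (\ref{e:beta}). Integrating in $r$ and absorbing the $(u,v)$-dependent factor into the free function yields the stated result.
\end{proof}
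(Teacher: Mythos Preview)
Your proposal is correct and follows essentially the same approach as the paper's own proof: change to congruence coordinates $(u,v,r)$ via (\ref{e:Phi}) with the rank one parameterization, pull back the flat metric, compute the divergence of a vector field with $V^u=V^v=0$, and integrate the resulting condition on $V^r$. Your use of the $\sqrt{|g|}$ formula and the identification of $\sqrt{|g|}$ as a $(u,v)$-factor times $(r+\beta)$ is a slightly more explicit packaging of the same computation the paper sketches.
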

\begin{proof}
from Euclidean coordinates $(z=x^1+ix^2,x^3)$ to congruence coordinates $(u,v,r)$ via equations (\ref{e:Phi}) with $(\xi(u),\eta(u,v))$.
Pull back the flat metric into congruence coordinates and compute the divergence with $ V^u=V^v=0 $.

The divergence-free condition can then be integrated for $V^r$.
\end{proof}

Finally, for rank zero oriented line congruences:

\vspace{0.1in}
\begin{Prop}
Let $\Sigma$ be a rank zero oriented line congruence - that is the set of parallel lines with some fixed direction. Choose a plane perpendicular to the lines and let $(u,v)$ be parameters on the plane. Then the vector field
\begin{equation}\label{e:rk0dv_free}
V=H(u,v)\frac{\partial}{\partial r},
\end{equation}
is divergence free for any real function $H$ where $\beta$ is given by equation (\ref{e:beta}) and $r$ is the distance to the plane.
\end{Prop}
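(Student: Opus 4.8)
The final statement is the third Proposition, concerning rank zero oriented line congruences, asserting that $V = H(u,v)\,\partial/\partial r$ is divergence free for the set of parallel lines with a fixed direction, where $(u,v)$ are coordinates on a perpendicular plane and $r$ is the distance to that plane.

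The plan is to mimic exactly the two preceding propositions: change from Euclidean coordinates to the congruence coordinates $(u,v,r)$, pull back the flat metric, and compute the divergence of $V$, verifying it vanishes identically.

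First, I would use the explicit parameterization of a rank zero congruence given in Section~\ref{s:lc}, namely $\xi = \xi_0$ constant and $\eta = u + iv$ (taking $\xi_0 = 0$ after a rotation, so that $z = 2\eta + \dots$, i.e.\ essentially $z = u+iv$ up to scaling, and $x^3 = r$, by equation~(\ref{e:Phi})). With $\xi_0 = 0$ the map $\Phi$ degenerates to the statement that $(u,v,r)$ are, up to an affine change, just Cartesian coordinates aligned so that $r$ measures signed distance to the chosen plane and $(u,v)$ are Cartesian coordinates on that plane. Hence the pulled-back flat metric in $(u,v,r)$ coordinates is constant-coefficient (in fact, after rescaling, it is the standard Euclidean metric $du^2 + dv^2 + dr^2$), so all Christoffel symbols $\Gamma^k_{\ell m}$ vanish.

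Next, with $V^u = V^v = 0$ and $V^r = H(u,v)$, the divergence formula $\nabla\cdot V = \partial_k V^k + \Gamma^k_{k\ell}V^\ell$ reduces to $\partial_r V^r + \Gamma^k_{kr}\,V^r$. Since $V^r = H(u,v)$ is independent of $r$, the first term is $\partial_r H(u,v) = 0$; since the metric is flat with constant coefficients in these coordinates, $\Gamma^k_{kr} = 0$ as well. Therefore $\nabla\cdot V = 0$ identically, for any function $H(u,v)$, which is the claim. (I would note that, consistent with the statement's phrasing, this is the $\beta \to \infty$, or rather the degenerate $r + \beta \to$ constant, limit of the rank one case~(\ref{e:rk1dv_free}); there is no focal set and $V$ has no singularities.)

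There is essentially no obstacle here: the only mild care needed is handling the chosen normalization of $\Phi$ when $\xi_0 = 0$ (the formula~(\ref{e:Phi}) carries harmless factors of $2$), and observing that the rank zero congruence is precisely the flat-coordinate case, so the computation collapses entirely. I would present it as a two-line verification rather than reproducing the general divergence calculation.

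\begin{proof}
Choose flat coordinates on ${\mathbb R}^3$ so that the lines of the congruence point in the $x^3$ direction, i.e.\ take the fixed direction to be $\xi_0=0$. Setting $u=x^1$, $v=x^2$ and $r=x^3$, equation~(\ref{e:Phi}) with $\xi=0$, $\eta=(u+iv)/2$ shows that $(u,v,r)$ are precisely these Cartesian coordinates, with $r$ the signed distance to the plane $\{x^3=0\}$. Hence the pulled-back flat metric is $du^2+dv^2+dr^2$ and all Christoffel symbols vanish. For $V=H(u,v)\,\partial/\partial r$ we have $V^u=V^v=0$ and $V^r=H(u,v)$, so
\[
\nabla\cdot V=\frac{\partial V^k}{\partial x^k}+\Gamma^k_{kl}V^l=\frac{\partial}{\partial r}H(u,v)=0,
\]
for any function $H$, as claimed.
\end{proof}
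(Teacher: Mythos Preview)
Your proof is correct and follows essentially the same approach as the paper: fix coordinates so the lines point in the $x^3$ direction, identify $(u,v,r)$ with Cartesian coordinates $(x^1,x^2,x^3)$, and observe that the divergence-free condition reduces to $\partial H/\partial r=0$. Your version is slightly more detailed in spelling out the vanishing of the Christoffel symbols, but the argument is the same.
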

\begin{proof}
If we fix the plane to be the $x^1x^2-$plane so that $x^1=u,x^2=v,x^3=r$ then for a fluid velocity 
\[
V=H(u,v,r)\frac{\partial}{\partial r},
\]
the divergence-free condition is simply
\[
\frac{\partial H}{\partial r}=0,
\]
as claimed.
\end{proof}

\vspace{0.1in}

\section{Rank Two Straight Streamlines}\label{s:rk2}

In this Section incompressible Euler flows with straight rank two streamlines are considered. As the associated oriented line congruence is of rank two throughout, it is given as the graph of a section $\eta=F(\xi,\bar{\xi})$, for some complex function $F$. The first order quantities $\sigma$, $\lambda$ and $\theta$  are defined by equations (\ref{e:spinco_rk2}).

For ease of exposition, the steady rank two fluid flow is solved first in Section \ref{s:rk2_s}. Non-steady rank two flows split into three cases: $\lambda^2-|\sigma|^2>0$, $\lambda^2-|\sigma|^2=0$ and $\lambda^2-|\sigma|^2<0$ and is solved in Section \ref{s:rk2_non_s}.

\subsection{Steady Rank Two Flow}\label{s:rk2_s}
\begin{Thm}
A rank two line congruence $\Sigma$ generates a steady solution of the incompressible Euler equations (\ref{e:euler1}) and (\ref{e:euler2}) iff $\Sigma$ is the set of normals to a round sphere and the fluid velocity vector and pressure are
\[
V=\frac{H_0}{r^2}\frac{\partial}{\partial r}  \qquad\qquad  p=p_0-\frac{H_0^2}{2r^4},
\]
for constants $H_0,p_0$, where $r$ is the distance to the centre of the sphere.
\end{Thm}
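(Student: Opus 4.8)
The plan is to work in the congruence-adapted coordinates $(\xi,\bar\xi,r)$ for the rank two line congruence $\eta=F(\xi,\bar\xi)$, and to use the divergence-free vector field already produced in the Proposition of Section~\ref{s:inc}. Since the streamlines are the straight lines of $\Sigma$, the velocity must be of the form $V=V^r\,\partial/\partial r$ with $V^\xi=V^{\bar\xi}=0$, and incompressibility forces
\[
V=\frac{H(\xi,\bar\xi)}{(r+\theta)^2+\lambda^2-|\sigma|^2}\frac{\partial}{\partial r}.
\]
The remaining task is to impose the steady Euler equation $\nabla_V V=-\nabla p$. First I would pull back the flat metric to $(\xi,\bar\xi,r)$ using \eqref{e:Phi}, read off the Christoffel symbols, and compute $\nabla_V V$ explicitly; because $V$ has only an $r$-component, $\nabla_V V=(V^r\partial_r V^r+\Gamma^r_{rr}(V^r)^2)\partial_r+\Gamma^\xi_{rr}(V^r)^2\partial_\xi+\Gamma^{\bar\xi}_{rr}(V^r)^2\partial_{\bar\xi}$, and one knows from the geometry that the line is geodesic, so the $\partial_\xi,\partial_{\bar\xi}$ pieces and $\Gamma^r_{rr}$ vanish; thus $\nabla_V V=(V^r\partial_r V^r)\partial_r=\tfrac12\partial_r((V^r)^2)\,\partial_r$.

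Next I would exploit the structure of $-\nabla p=-g^{ab}\partial_b p\,\partial_a$. Taking the $\partial_\xi$ and $\partial_{\bar\xi}$ components of the Euler equation, and using that the right-hand side $\nabla_V V$ has no such components, gives equations relating $\partial_\xi p$, $\partial_{\bar\xi} p$ and $\partial_r p$ through the metric coefficients; combined with the $\partial_r$ component $\tfrac12\partial_r((V^r)^2)=-g^{rb}\partial_b p$, this is an overdetermined first-order system for the single function $p(\xi,\bar\xi,r)$. The compatibility conditions $\partial_\xi\partial_{\bar\xi}p=\partial_{\bar\xi}\partial_\xi p$, $\partial_\xi\partial_r p=\partial_r\partial_\xi p$, etc., together with the explicit $r$-dependence of $V^r$ (which is a rational function of $r$ with denominator $(r+\theta)^2+\lambda^2-|\sigma|^2$), should force strong constraints on the first-order invariants $\sigma,\theta,\lambda$ and on $H$. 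The expectation, guided by Example~\ref{ex:sph}, is that these constraints reduce to $\sigma=0$, $\lambda=0$ and $\theta=0$ identically, i.e. the congruence is shear-free, twist-free and divergence-free, which by the standard classification of such congruences (equivalently, $F$ being a quadratic polynomial in $\xi$ of the form in Example~\ref{ex:sph}) means $\Sigma$ is exactly the family of normals to a round sphere; then $V=H_0/r^2\,\partial_r$ and integrating $\tfrac12\partial_r(H_0^2/r^4)=-\partial_r p$ (all other components of $\nabla p$ vanishing) yields $p=p_0-H_0^2/(2r^4)$.

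The main obstacle I anticipate is the algebra of the overdetermined pressure system: showing that compatibility genuinely forces $\sigma=\lambda=\theta=0$ rather than merely constraining some combination of them. One has to be careful because $\theta$ can be a nontrivial function of $\xi,\bar\xi$ even for admissible congruences, and the focal-set denominator makes $V^r$ a genuinely $r$-dependent rational function, so separating variables in $r$ in the integrability conditions requires matching coefficients of several powers of $r$ (or of partial fractions in $(r+\theta)^2+\lambda^2-|\sigma|^2$). I would organize this by first extracting the highest-order-in-$r$ part of each integrability condition to kill $\sigma$ and $\lambda$, then the subleading part to kill the gradient of $\theta$, and finally a normalization/translation argument to set $\theta\equiv 0$. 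Once the congruence is pinned down as the sphere normals, the remaining verification that the displayed $V$ and $p$ solve \eqref{e:euler1}--\eqref{e:euler2} is exactly the computation already indicated in Example~\ref{ex:sph}, so the theorem follows. The converse direction (sphere normals $\Rightarrow$ solution) is immediate from that example.
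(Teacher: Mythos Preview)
Your overall strategy---write the steady Euler equation in congruence coordinates, integrate the $r$--component to get $p$ explicitly, feed that back into the transverse components, and then separate by powers of $r$---is exactly what the paper does. So the approach is the same.

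Where your expectations go wrong is in the bookkeeping of which power of $r$ kills which quantity, and this matters because one of the steps is not just power--matching. In the paper the top powers of $r$ do \emph{not} kill $\sigma$ and $\lambda$; they force the integration function $K(\xi,\bar\xi)$ and then $H(\xi,\bar\xi)$ to be constants. The linear--in--$r$ term gives the relation
\[
\frac{\partial\theta}{\partial\xi}+\frac{2\bar F}{(1+\xi\bar\xi)^2}=0,
\]
and it is only after differentiating this in $\bar\xi$ and taking the imaginary part that one obtains $\lambda=0$. The constant term then yields $\partial_\xi|\sigma|=0$, which by itself does not give $\sigma=0$; the paper needs the additional identity
\[
(1+\xi\bar\xi)^2\,\partial_{\bar\xi}\!\left(\frac{\sigma}{(1+\xi\bar\xi)^2}\right)
=\partial_\xi(\theta+i\lambda)+\frac{2\bar F}{(1+\xi\bar\xi)^2}
\]
(which, given the previous relation and $\lambda=0$, makes the right--hand side vanish) together with a short argument that $\partial_{\bar\xi}\big(e^{i\phi}/(1+\xi\bar\xi)^2\big)=0$ is impossible, to conclude $\sigma=0$. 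Finally, $\theta\equiv 0$ is never forced by the Euler equations: $\sigma=\lambda=0$ already characterizes the congruence as all lines through a point, and $\theta$ simply encodes where that point sits---it is set to zero only by the translation you mention at the end. So your plan is right, but the ``kill $\sigma,\lambda$ at top order, then $\theta$'' ordering and the hope that pure coefficient--matching suffices would not pan out as written.
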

\begin{proof}
Impose the divergence-free condition on a vector field tangent to the line congruence by insisting from equation (\ref{e:rk2dv_free}) that 
\[
V^\xi=V^{\bar{\xi}}=0 \qquad\qquad V^r=\frac{H(\xi,\bar{\xi})}{(r+\theta)^2+\lambda^2-|\sigma|^2}.
\]

Writing the Euler equations in congruence coordinates $(\xi,\bar{\xi},r)$ leads to
\begin{equation}\label{e:euler_trans}
\frac{\partial V^\xi}{\partial t}=0 \iff \frac{\partial p}{\partial \xi}=-\frac{2\bar{\eta}}{(1+\xi\bar{\xi})^2}\frac{\partial p}{\partial r},
\end{equation}
and 
\[
\frac{\partial V^r}{\partial t}=0 \iff \frac{\partial p}{\partial r}=\frac{2(r+\theta)H^2}{((r+\theta)^2+\lambda^2-|\sigma|^2)^3}.
\]
This last can be integrated to yield
\[
    p=K(\xi,\bar{\xi})-\frac{H^2}{2((r+\theta)^2+\lambda^2-|\sigma|^2)^2}.
\]
Substituting this back in equation (\ref{e:euler_trans}) and comparing powers of $r$ one finds from the $r^6$ term that $K=p_0=constant$, while from the $r^2$ term that $H=H_0=constant$. The $r$ term is
\begin{equation}\label{e:cmroc}
\frac{\partial \theta}{\partial \xi}+\frac{2\bar{F}}{(1+\xi\bar{\xi})^2}=0.
\end{equation}
Differentiating this with respect to $\bar{\xi}$ and projecting onto the imaginary part yields
\[
\frac{\partial}{\partial \xi}\left(\frac{2{F}}{(1+\xi\bar{\xi})^2}\right)-\frac{\partial}{\partial \bar{\xi}}\left(\frac{2\bar{F}}{(1+\xi\bar{\xi})^2}\right)=\frac{2i\lambda}{(1+\xi\bar{\xi})^2}=0.
\]
where $\lambda$ is the twist of the line congruence defined in the second of equations (\ref{e:spinco_rk2}). Thus $\Sigma$ is Lagrangian and the lines are orthogonal to a 1-parameter family of surfaces in ${\mathbb R}^3$ \cite{gk08b}.  By equation (\ref{e:cmroc}) the orthogonal surface in ${\mathbb R}^3$ is of constant mean curvature. 

Finally the constant term is now 
\[
\frac{\partial (|\sigma|)}{\partial \xi}=0.
\]
The following identity holds between derivatives 
\[
(1+\xi\bar{\xi})^2\frac{\partial}{\partial \bar{\xi}}\left(\frac{\sigma}{(1+\xi\bar{\xi})^2} \right)=\frac{\partial (\theta+i\lambda)}{\partial \xi}+\frac{2\bar{F}}{(1+\xi\bar{\xi})^2},
\]
and so in our case, we know that 
\[
(1+\xi\bar{\xi})^2\frac{\partial}{\partial \bar{\xi}}\left(\frac{\sigma}{(1+\xi\bar{\xi})^2} \right)=0.
\]
If $\sigma\neq0$, and $\sigma=|\sigma|e^{i\phi}$ this reduces to 
\[
\frac{\partial}{\partial \bar{\xi}}\left(\frac{e^{i\phi}}{(1+\xi\bar{\xi})^2}\right).
\]
This is easily seen to be impossible, so we conclude that $\sigma=0$ and $\Sigma$ is the set of lines through a point. 

Moreover, the velocity and pressure of the fluid are as stated. In particular, if we translate the centre to the origin, then $\eta=0$ and so $\sigma=\lambda=\theta=0$ and
\[
V^r=\frac{H_0}{r^2}, \qquad \qquad p=p_0-\frac{H_0^2}{2r^4}. 
\]
Finally $r$ is the distance to the centre of the sphere.

\end{proof}

\vspace{0.1in}

\subsection{Non-Steady Rank Two Flow}\label{s:rk2_non_s}

We now drop the condition that the fluid flow be steady and find that the same result applies if the line congruence to which it is tangent remains fixed:
\vspace{0.1in}
\begin{Thm}
A fixed rank two line congruence $\Sigma$ generates a solution of the incompressible Euler equations iff $\Sigma$ is the set of normals to a round sphere, and the fluid velocity vector and pressure are those given in Example \ref{ex:sph}.
\end{Thm}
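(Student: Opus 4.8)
The plan is to follow the same scheme as in the steady case (Theorem in Section \ref{s:rk2_s}), but now allowing the free function $H$ and the ``constant'' of integration to depend on $t$, and to show that the extra time-dependence is forced into exactly the form of Example \ref{ex:sph}. As before, incompressibility (\ref{e:euler2}) together with tangency to the fixed congruence $\Sigma$ forces
\[
V^\xi=V^{\bar\xi}=0,\qquad V^r=\frac{H(\xi,\bar\xi,t)}{(r+\theta)^2+\lambda^2-|\sigma|^2},
\]
with $\sigma,\theta,\lambda$ depending only on $(\xi,\bar\xi)$ since $\Sigma$ is fixed. I would then write out (\ref{e:euler1}) in congruence coordinates $(\xi,\bar\xi,r)$. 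The $r$-component now reads $\partial_t V^r+\nabla_V V|^r=-\partial_r p$; since $\partial_t V^r=\dot H/((r+\theta)^2+\lambda^2-|\sigma|^2)$ and the convective term is the same rational function of $r$ as in the steady case, this integrates to
\[
p=K(\xi,\bar\xi,t)-\frac{H^2}{2\big((r+\theta)^2+\lambda^2-|\sigma|^2\big)^2}-\dot H\,\Psi(r,\xi,\bar\xi),
\]
where $\Psi$ is the antiderivative in $r$ of $1/((r+\theta)^2+\lambda^2-|\sigma|^2)$ — an arctangent, logarithm, or simple pole depending on the sign of $\lambda^2-|\sigma|^2$ (this is exactly the trichotomy flagged in the section introduction, and is why the non-steady case splits into three subcases).

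Next I would substitute this $p$ into the $\xi$-component of the Euler equations, which after clearing the coordinate factors becomes the analogue of (\ref{e:euler_trans}):
\[
\frac{\partial p}{\partial\xi}=-\frac{2\bar F}{(1+\xi\bar\xi)^2}\,\frac{\partial p}{\partial r}.
\]
Here the key point is that the transcendental/pole term $\dot H\,\Psi$ must separately satisfy this relation (its functional form in $r$ is not matched by anything coming from the rational part), and working that out forces $\theta$, $\lambda^2-|\sigma|^2$ and the combination entering $\Psi$ to be constrained; combined with the vanishing of $\dot H\cdot(\text{obstruction})$ one concludes that either $\dot H\equiv 0$ (reducing to the steady theorem) or the geometric quantities collapse. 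In either branch, matching the remaining powers of $r$ in the rational part reproduces verbatim the steady analysis: the top power gives $K=K(t)$ independent of $(\xi,\bar\xi)$ (absorbable into $p_0$ plus the $\dot H/r$ term), the next gives $H$ independent of $(\xi,\bar\xi)$, hence $H=H(t)$ alone, the linear-in-$r$ term gives (\ref{e:cmroc}) and thence $\lambda=0$ and constant mean curvature, and the constant term gives $\sigma=0$. Thus $\Sigma$ is again the set of normals to a round sphere, $\theta=\lambda=\sigma=0$ after translating the centre to the origin, $\Psi=-1/r$, and $V^r=H(t)/r^2$, $p=p_0-H^2/(2r^4)+\dot H/r$, matching Example \ref{ex:sph}.

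The main obstacle I anticipate is handling the three sign-cases of $\lambda^2-|\sigma|^2$ uniformly while extracting the constraint from the $\dot H\,\Psi$ term: one must be careful that $\Psi$ genuinely has a functional form in $r$ linearly independent (over functions of $(\xi,\bar\xi)$) from $1$, $r$, and the negative powers appearing in the rational part, so that the $\xi$-equation really does force $\dot H$ times a geometric obstruction to vanish rather than allowing some accidental cancellation. Once that independence is established the argument is essentially the steady one with $t$ carried along as a parameter, and the only genuinely new output is the $\dot H/r$ term in the pressure and the time-dependence of $H$; I would present the $\lambda^2-|\sigma|^2>0$ case in detail and indicate that the other two are identical \emph{mutatis mutandis}, since in the end all three are shown to reduce to the degenerate (sphere) case anyway.
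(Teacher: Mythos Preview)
Your approach is essentially the paper's: integrate the $r$-component to get $p$ with a $\dot H\,\Psi$ term, substitute into the $\xi$-equation, separate the transcendental piece from the polynomial piece, and match powers of $r$. Two small corrections to your narrative, though. First, the separation of the $\dot H\,\Psi$ part does \emph{not} yield the dichotomy ``$\dot H\equiv 0$ or the geometry collapses''; in the case $\lambda^2-|\sigma|^2>0$ the top coefficient of the $\tan^{-1}$-part only gives $\dot H=A(t)(\lambda^2-|\sigma|^2)$, which is perfectly compatible with $\dot H\neq 0$. The contradiction in that case comes entirely from the rational (polynomial-in-$r$) part: the same constant-mean-curvature relation (\ref{e:cmroc}) forces $\lambda=0$, which is incompatible with $\lambda^2>|\sigma|^2\ge 0$. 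Second, and consequently, the three cases do \emph{not} all ``reduce to the degenerate (sphere) case''. Cases $\lambda^2-|\sigma|^2>0$ and $\lambda^2-|\sigma|^2<0$ are simply empty (the assumed sign is contradicted), and only the borderline case $\lambda^2-|\sigma|^2=0$ survives; there $\Psi=-1/(r+\theta)$, the power-matching gives $K=p_0$, $\dot H=\dot H(t)$, (\ref{e:cmroc}), hence $\lambda=0$ and then $\sigma=0$, and you land on Example~\ref{ex:sph}. So present case (ii) as the live case and cases (i), (iii) as yielding contradictions, rather than as three parallel routes to the same endpoint.
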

\begin{proof}
Writing the Euler equations in congruence coordinates $(\xi,\bar{\xi},r)$ we find that
\begin{equation}\label{e:euler_trans_f}
\frac{\partial p}{\partial \xi}=-\frac{2\bar{\eta}}{(1+\xi\bar{\xi})^2}\frac{\partial p}{\partial r},
\end{equation}
and 
\begin{equation}\label{e:euler_r_f}
\frac{1}{(r+\theta)^2+\lambda^2-|\sigma|^2}\frac{\partial H}{\partial t}=-\frac{\partial p}{\partial r}+\frac{2(r+\theta)H^2}{((r+\theta)^2+\lambda^2-|\sigma|^2)^3}.
\end{equation}
This last equation can be integrated explicitly in $r$ when the three cases (i) $\lambda^2-|\sigma|^2>0$,  (ii) $\lambda^2-|\sigma|^2>0$ and (iii) $\lambda^2-|\sigma|^2>0$ are treated separately. 

\vspace{0.1in}
\noindent{\bf Case (i)}:

Assume $\lambda^2-|\sigma|^2>0$. Equation (\ref{e:euler_r_f}) integrates to
\[
    p=K(\xi,\bar{\xi})-\frac{H^2}{2((r+\theta)^2+\lambda^2-|\sigma|^2)^2}-\frac{\dot{H}}{\sqrt{\lambda^2-|\sigma|^2}}\tan^{-1}\left(\frac{r+\theta}{\sqrt{\lambda^2-|\sigma|^2}}\right).
\]
where a dot represents differentiation with respect to time $t$. Substituting this in equation (\ref{e:euler_trans_f}) yields an equation of the form 
\[
\alpha(\xi,\bar{\xi},r)+\beta(\xi,\bar{\xi},r)\tan^{-1}\left(\frac{r+\theta}{\sqrt{\lambda^2-|\sigma|^2}}\right)=0,
\]
where the functions $\alpha$ and $\beta$ are 6th order polynomials in $r$. Thus we must have $\alpha=0$ and $\beta=0$ and each coefficient must vanish. 

The vanishing of the $r^6$ coefficient of $\alpha$ and $\beta$ yield $K=K_0=constant$ and
\[
\dot{H}=A(t)(\lambda^2-|\sigma|^2).
\]
Moving to the vanishing of the 5th and 4th orders of $\alpha$ we obtain $\lambda^2-|\sigma|^2=constant$, and 
\[
\frac{\partial \theta}{\partial \xi}+\frac{2\bar{F}}{(1+\xi\bar{\xi})^2}=0.
\]
As in the steady case, this implies that $\lambda=0$, which contradicts $\lambda^2>|\sigma|^2$.
Thus, there are no solutions for case (i).

\vspace{0.1in}
\noindent{\bf Case (ii)}:

Assume $\lambda^2-|\sigma|^2=0$. Equation (\ref{e:euler_r_f}) integrates to
\[
    p=K(\xi,\bar{\xi})-\frac{H^2}{2(r+\theta)^4}+\frac{\dot{H}}{r+\theta}.
\]
where a dot represents differentiation with respect to time $t$. Substituting this in equation (\ref{e:euler_trans_f}) yields an equation which is the vanishing of a 6th order polynomial in $r$. 

The vanishing of the $r^6$  and $r^4$ coefficients yield $K=p_0=constant$ and $\dot{H}=A(t)$, while the 3rd order implies
\[
\frac{\partial \theta}{\partial \xi}+\frac{2\bar{\eta}}{(1+\xi\bar{\xi})^2}=0.
\]
As before, we see that this implies $\lambda=0$ and since $\lambda^2-|\sigma|^2=0$, we have $\sigma=0$. Thus the line congruence is the lines through a single point and the fluid velocity and pressure are as stated.

\vspace{0.1in}
\noindent{\bf Case (iii)}:

Assume $\lambda^2-|\sigma|^2<0$. Equation (\ref{e:euler_r_f}) integrates to
\[
    p=K(\xi,\bar{\xi})-\frac{H^2}{2((r+\theta)^2+\lambda^2-|\sigma|^2)^2}-\frac{\dot{H}}{2\sqrt{|\sigma|^2-\lambda^2}}\ln\left(\frac{r+\theta-\sqrt{|\sigma|^2-\lambda^2}}{r+\theta+\sqrt{|\sigma|^2-\lambda^2}}\right).
\]
where a dot represents differentiation with respect to time $t$. Substituting this in equation (\ref{e:euler_trans_f}) yields an equation of the form 
\[
\alpha(\xi,\bar{\xi},r)+\beta(\xi,\bar{\xi},r)\ln\left(\frac{r+\theta-\sqrt{|\sigma|^2-\lambda^2}}{r+\theta+\sqrt{|\sigma|^2-\lambda^2}}\right)=0,
\]
where the functions $\alpha$ and $\beta$ are again 6th order polynomials in $r$, each coefficient of which must vanish. An identical calculation through descending powers of $r$ as case (i) yields the same result: $\sigma=\lambda=0$. This contradicts the assumption that $\lambda^2-|\sigma|^2<0$ and so there are no solutions in this case.

\end{proof}
\vspace{0.1in}
\section{Rank One Straight Streamlines}\label{s:rk1}
In this section incompressible Euler flows with straight rank one streamlines are considered. The steady flow case is proven first in Section \ref{s:rk1_s}, followed by the non-steady flow case in Section \ref{s:rk1_non_s}.

\subsection{Steady Rank One Flow}\label{s:rk1_s}
\begin{Thm}\label{t:rk1steady}
A rank one line congruence $\Sigma$ generates a steady solution of the incompressible Euler equations iff $\Sigma$ is the set of normals to a circular cylinder, and the fluid velocity vector and pressure are
\[
V=\frac{H_0}{r}\frac{\partial}{\partial r}  \qquad\qquad  p=p_0-\frac{H_0^2}{2r^2},
\]
for constants $H_0,p_0$, where $r$ is the distance to the centre of the cylinder.
\end{Thm}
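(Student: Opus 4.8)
The plan is to follow the template already established for the rank two steady case, adapted to the rank one parametrization $(u,v)\mapsto(\xi(u),\eta(u,v))$ and the divergence-free field $V=\tfrac{H(u,v)}{r+\beta}\partial_r$ from equation (\ref{e:rk1dv_free}). First I would transform the Euler equations (\ref{e:euler1})--(\ref{e:euler2}) into congruence coordinates $(u,v,r)$ using $\Phi$ from equation (\ref{e:Phi}), setting $V^u=V^v=0$ and $V^r=H/(r+\beta)$. The $r$-component of the steady Euler equation, $\partial_t V^r=0$ together with $\nabla_V V=-\nabla p$, should reduce to an ODE in $r$ of the form $\partial_r p = (\text{something involving }H,\beta)/(r+\beta)^3$, which integrates to $p=K(u,v)-\tfrac{H^2}{2(r+\beta)^2}$ up to possible extra terms coming from the Christoffel symbols of the pulled-back flat metric.

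Next I would feed this pressure back into the $u$- and $v$-components of the Euler equation. As in the rank two case, these become polynomial identities in $r$ (after clearing denominators) whose coefficients must each vanish. I expect the top-order coefficients to force $K$ constant and $H$ constant, the next orders to impose a first-order PDE on $\beta$ and on the defining data $\xi(u),\eta(u,v)$, and the lowest orders to pin down the congruence geometry. The analog of the "constant mean curvature" conclusion in the rank two proof should here be the statement that the lines are orthogonal to a family of surfaces (Lagrangian condition, i.e. $\beta$ plays the role that $\theta$ did), forcing the congruence to be the normals of a surface of revolution with a degenerate principal curvature, hence a circular cylinder. After a rotation/translation lining the axis up with the $x^2$-axis, one gets $\xi=u$, $\eta=iv$ as in Example \ref{ex:cyl}, $\beta=0$, and the stated $V=H_0 r^{-1}\partial_r$, $p=p_0-H_0^2/(2r^2)$.

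The main obstacle I anticipate is the bookkeeping in the rank one case: unlike the rank two setting, where $\sigma,\theta,\lambda$ are compact holomorphic-type expressions in $F$, here $\beta$ is the unwieldy rational expression (\ref{e:beta}) in $\xi(u),\eta(u,v)$ and their $u,v$-derivatives, so isolating and simplifying the vanishing coefficients — especially disentangling which PDE on the raw data $(\xi,\eta)$ corresponds to "twist-free" versus "one vanishing focal distance" — will be delicate. The key algebraic identity to locate is the rank one analog of the relation between $\partial_\xi(\theta+i\lambda)$ and $\bar F$ used in the rank two proof; with that in hand, the rigidity argument (ruling out a genuinely twisting rank one congruence, then ruling out a non-cylindrical revolution profile) should close exactly as before, by showing any nonzero "shear-type" term leads to a contradiction in the lowest-order $r$-coefficient.
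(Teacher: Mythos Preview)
Your setup through the integration of the $r$-equation is correct and matches the paper: one obtains $p=K(u,v)-\tfrac{H^2}{2(r+\beta)^2}$ with no extra Christoffel terms, and substituting back into the transverse equations does yield polynomial identities in $r$. However, the geometric closure you sketch --- a Lagrangian/twist-free condition, then ``surface of revolution with a degenerate principal curvature'', then a shear-type contradiction --- is not how the argument runs, and as stated has a gap: a twist-free rank one congruence is normal to a developable surface, not a priori a surface of revolution, so you would still need to rule out general cones and tangent developables. The rank two identity relating $\partial_\xi(\theta+i\lambda)$ to $\bar F$ that you plan to mimic has no useful rank one analogue here.

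The paper instead exploits a simplification you overlook: the $v$-component of the Euler equation gives $\partial p/\partial v=0$ outright, which (holding for all $r$) forces $K$, $H$, and $\beta$ to depend on $u$ alone before any further work. The zeroth-order $r$-coefficient of the $u$-equation can then be solved for $\eta$ explicitly in terms of $\beta(u)$, $\xi(u)$ and the free parameter $v$. Feeding this $\eta$ back in, the $r^2 v$-coefficient becomes precisely the geodesic equation for the curve $u\mapsto\xi(u)$ on $S^2$ in holomorphic coordinates, so the direction curve is a great circle; after rotating to $\xi=u$, the $r^2$-coefficient is a linear second-order ODE for $\beta$ whose two-parameter solution is absorbed by a Euclidean translation, leaving $\xi=u$, $\eta=iv$. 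Thus the rigidity comes from the appearance of the $S^2$-geodesic equation and an explicit ODE for $\beta$, not from a shear contradiction, and your anticipated bookkeeping obstacle with the raw expression for $\beta$ is largely bypassed since $\beta$ is treated as an unknown function of $u$ throughout.
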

\begin{proof}
As the associated oriented line congruence is of rank one throughout, it is given by a map $(u,v)\mapsto(\xi(u),\eta(u,v))$. The first order real quantity $\beta$ is defined by equation (\ref{e:beta}).

Consider then a rank one line congruence and divergence-free vector field given by equation (\ref{e:rk1dv_free}):
\[
V=\frac{H}{r+\beta}\frac{\partial}{\partial r},
\]
where $H$ is an arbitrary function  $(u,v)$ and $\beta$ is given by equation (\ref{e:beta}). Writing everything in terms of the congruence coordinates $(u,v,r)$ the first two Euler equations yield
\[
\frac{\partial p}{\partial u}=-\frac{2}{(1+\xi\bar{\xi})^2}\frac{\partial p}{\partial r}\left(\bar{\eta}\frac{d\xi}{d u}+\eta\frac{d\bar{\xi}}{d u}\right)
\qquad\qquad
\frac{\partial p}{\partial v}=0.
\]
The third Euler equation then gives
\[
\frac{\partial p}{\partial r}=\frac{H^2}{(r+\beta)^3},
\]
which integrates to
\begin{equation}\label{e:rk1press}
p=p_0-\frac{H^2}{2(r+\beta)^2}.
\end{equation}
Since we know that $p$ is independent of $v$ for all $r$, this means that $p_0,\beta$ and $H$ are functions only of $u$.

Substituting equation (\ref{e:rk1press}) into the first Euler equation we obtain an expression that is cubic in $r$. Thus each of the coefficients must vanish. In particular, from the cubic term $p_0=constant$ and the linear term $H=constant$, while the zeroth order term in $r$ now says that
\[
\frac{d \beta}{d u}=-\frac{2}{(1+\xi\bar{\xi})^2}\left(\bar{\eta}\frac{d\xi}{d u}+\eta\frac{d\bar{\xi}}{d u}\right).
\]
Only the complex function $\eta$ depends on $v$ and hence, after a suitable choice of this parameterization, the previous equation has solution
\begin{equation}\label{e:rk1eta}
\eta=\left(-\frac{(1+\xi\bar{\xi})^2}{4|\dot{\xi}|^2}\frac{d \beta}{d u}+iv\right)\frac{d\xi}{d u},
\end{equation}
where we introduce the dot for differentiation with respect to $u$.

Substituting for $\eta$ in the final Euler equation, the $r^2v$ term says that
\[
(1+\xi\bar{\xi})(\dot{\bar{\xi}}\ddot{\xi}-\dot{\xi}\ddot{\bar{\xi}})-2(\bar{\xi}\dot{\xi}-\xi\dot{\bar{\xi}})\dot{\xi}\dot{\bar{\xi}}=0.
\]
This is precisely the geodesic equation on the 2-sphere in holomorphic coordinates, and we conclude that the line congruence projects to a great circle on $S^2$.

By a rotation arrange that $\xi=u$ so that the great circle passes through the north pole ($\xi=0$) and aligns with the real axis. Now the vanishing of the $r^2$ term of the final Euler equation requires that
\[
(1+u^2)^2\ddot{\beta}+2u(1+u^2)\dot{\beta}+4\beta=0,
\]
which has general solution
\[
\beta=\frac{b_0+2b_1u-b_0u^2}{1+u^2},
\]
for constants $b_0,b_1$.

Substituting this into equation (\ref{e:rk1eta}) we find the line congruence must be
\[
\xi=u
\qquad\qquad
\eta=-{\scriptstyle{\frac{1}{2}}}(b_1-2b_0u-b_1u^2)+iv.
\]
Finally, a translation in the $x^1x^3-$plane sets $b_0$ and $b_1$ to zero and we get the line congruence 
\[
\xi=u
\qquad\qquad
\eta=iv.
\]
which consists of all of the horizontal lines that intersect the $x^3-$axis. These are the lines normal to a circular cylinder, and the velocity vector and pressure are found to be as stated in the Theorem where now  $r$ is the distance to the centre of the cylinder.
\end{proof}

\subsection{Non-Steady Rank One Flow}\label{s:rk1_non_s}

We now drop the condition that the fluid flow be steady:
\vspace{0.1in}
\begin{Thm}
A fixed rank one line congruence $\Sigma$ generates a solution of the incompressible Euler equations iff $\Sigma$ is the set of normals to a circular cylinder, and the fluid velocity vector and pressure are those given in Example \ref{ex:cyl}.
\end{Thm}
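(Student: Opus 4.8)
The plan is to mirror the steady rank-one argument of Theorem \ref{t:rk1steady}, carrying the time-dependence through the same chain of coefficient-matching and then showing that the only new freedom is the function $H(t)$ of Example \ref{ex:cyl}. I would start with the divergence-free velocity $V = \frac{H}{r+\beta}\,\partial_r$ from equation (\ref{e:rk1dv_free}), now allowing $H = H(u,v,t)$, and write equations (\ref{e:euler1})–(\ref{e:euler2}) in the congruence coordinates $(u,v,r)$ attached to the \emph{fixed} line congruence $(u,v)\mapsto(\xi(u),\eta(u,v))$. The $\partial_v$-component of (\ref{e:euler1}) again forces $\partial p/\partial v = 0$, and the $\partial_r$-component becomes the non-steady analogue
\[
\frac{1}{r+\beta}\frac{\partial H}{\partial t} = -\frac{\partial p}{\partial r} + \frac{H^2}{(r+\beta)^3},
\]
which integrates in $r$ to
\[
p = K(u,v,t) - \frac{H^2}{2(r+\beta)^2} + \dot H\,\ln|r+\beta|.
\]
Since $p$ must be $v$-independent for all $r$, and $\beta$ is already $v$-independent, comparing the $\ln$, the $(r+\beta)^{-2}$ and the constant pieces gives that $K$, $\dot H$ and $H$ are all functions of $u$ and $t$ only; in particular $H=H(u,t)$, and it is natural to expect $H$ to reduce to a function of $t$ alone at the end.

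Next I would substitute this pressure into the remaining ($\partial_u$) Euler equation. As in the steady case the left-hand side is a polynomial in $r$ (now with a $\ln|r+\beta|$ term whose coefficient must separately vanish), so every coefficient vanishes. The highest-order term should again give $K = K_0(t)$, the logarithmic term should give $\dot H$ independent of $u$ — hence $H = H(t)$ — and the surviving identity at the appropriate order should reproduce exactly
\[
\frac{d\beta}{du} = -\frac{2}{(1+\xi\bar\xi)^2}\left(\bar\eta\frac{d\xi}{du} + \eta\frac{d\bar\xi}{du}\right),
\]
which is the \emph{same} structural equation as in Theorem \ref{t:rk1steady}. From here the argument is purely geometric and time-free: it forces $\eta$ to have the form (\ref{e:rk1eta}), the $r^2v$ coefficient forces the projection $\xi(u)$ to be a geodesic (great circle) on $S^2$, and after the rotation $\xi=u$ the $r^2$ coefficient forces $(1+u^2)^2\ddot\beta + 2u(1+u^2)\dot\beta + 4\beta = 0$, whose solution after a translation in the $x^1x^3$-plane gives $\xi=u$, $\eta=iv$ — the normals to a circular cylinder. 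With the congruence pinned down, $\beta$ is computed to vanish (as noted in Example \ref{ex:cyl}), and back-substitution into the integrated pressure yields $p = p_0 - \frac{H^2}{2r^2} - \dot H\ln|r|$ with $V = \frac{H(t)}{r}\,\partial_r$, exactly Example \ref{ex:cyl}. The converse direction is the direct verification already recorded in that Example.

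The main obstacle I anticipate is bookkeeping rather than conceptual: one must be careful that introducing the $\dot H\ln|r+\beta|$ term into $p$ does not spoil the polynomial-in-$r$ structure of the $\partial_u$ Euler equation. Because $\beta=\beta(u)$, the chain rule produces a $\dot H\,\beta'(u)/(r+\beta)$ contribution alongside a genuine $\ln|r+\beta|$ term, so the equation is of the form (polynomial in $r$) $+ A(u,t)/(r+\beta) + B(u,t)\ln|r+\beta| = 0$; independence of $1$, $(r+\beta)^{-1}$ and $\ln|r+\beta|$ over the relevant ring lets one split it into three vanishing conditions. Verifying that the $\ln$-coefficient $B$ vanishing is precisely what gives $\dot H$ constant in $u$ (so $H=H(t)$), and that the $A$-term contributes only a harmless constraint already implied by $d\beta/du$'s equation, is the one place where the non-steady case genuinely differs from the steady one; everything downstream is identical to Theorem \ref{t:rk1steady}.
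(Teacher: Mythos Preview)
Your approach is essentially the paper's: integrate the $r$-equation to $p = p_0 - \frac{H^2}{2(r+\beta)^2} - \dot H\ln|r+\beta|$ (watch the sign on the logarithm in your intermediate formula), use $\partial p/\partial v=0$ to force $p_0$, $H$ and $\beta$ to depend only on $u$ and $t$, then substitute into the $u$-equation and match orders of $r$ exactly as in Theorem~\ref{t:rk1steady}. One small slip: $\beta$ is not $v$-independent a priori --- that too must be deduced from $\partial p/\partial v=0$, just as in the steady proof.
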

\begin{proof}
Writing the Euler equations in congruence coordinates $(u,v,r)$ we find that
\begin{equation}\label{e:rk1trans}
\frac{\partial p}{\partial u}=-\frac{2}{(1+\xi\bar{\xi})^2}\frac{\partial p}{\partial r}\left(\bar{\eta}\frac{d\xi}{d u}+\eta\frac{d\bar{\xi}}{d u}\right)
\qquad\qquad
\frac{\partial p}{\partial v}=0.
\end{equation}
The third Euler equation then gives
\[
\frac{1}{r+\beta}\frac{\partial H}{\partial t}=-\frac{\partial p}{\partial r}+\frac{H^2}{(r+\beta)^3},
\]
which integrates to
\[
p=p_0-\frac{H^2}{2(r+\beta)^2}-\dot{H}\ln|r+\beta|.
\]
for function $p_0(u,v,t)$. In fact, since $p$ must be independent of $v$ (for all $r$) we conclude that that $p_0$ and $H$ can only depend on $u$.

Substituting this back into the first of equations (\ref{e:rk1trans}) and proceeding by the order of $r$ in a manner similar to the proof of Theorem \ref{t:rk1steady}, the result follows.

\end{proof}

\vspace{0.1in}
\section{Rank Zero Straight Streamlines}\label{s:rk0}
In this section incompressible Euler flows with straight rank zero streamlines are considered.

\begin{Thm}\label{t:rk0}
A rank zero oriented line congruence $\Sigma$ generates a solution of the incompressible Euler equations iff $\Sigma$ is the set of normals to a flat plane, and the fluid velocity vector and pressure are those given in Example \ref{ex:pln}.
\end{Thm}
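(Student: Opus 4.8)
The plan is to mirror the structure of the rank-two and rank-one arguments, but the rank-zero case is considerably simpler because the line congruence is rigid: up to a rotation it is the family of all lines parallel to the $x^3$-axis, and there is no freedom in $\xi$ at all. First I would fix coordinates so that $\xi=\xi_0=0$ and $\eta=(u+iv)/2$, which by equations (\ref{e:Phi}) gives the global flat coordinates $x^1=u$, $x^2=v$, $x^3=r$; so congruence coordinates coincide with Euclidean coordinates and the connection coefficients all vanish. By the third Proposition of Section \ref{s:inc}, the incompressibility condition forces $V=H(u,v,t)\,\partial/\partial r$, i.e. $\partial H/\partial r=0$, so $H$ is a function of $(u,v,t)$ only (allowing time-dependence now, unlike the steady statement of the Proposition).

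Next I would write out the Euler equation (\ref{e:euler1}) componentwise in these coordinates. Since $V^1=V^2=0$ and $V^3=H(u,v,t)$, the convective term $\nabla_VV$ has components $V^3\partial_3 V^k = H\,\partial_r V^k$, all of which vanish because $H$ is $r$-independent. Hence the three equations collapse to
\[
0=-\frac{\partial p}{\partial u},\qquad 0=-\frac{\partial p}{\partial v},\qquad \frac{\partial H}{\partial t}=-\frac{\partial p}{\partial r}.
\]
The first two say $p=p(r,t)$, and then the third forces $\partial H/\partial t$ to be independent of $(u,v)$: writing $\partial_t H = -\partial_r p =: \dot H(t)$, we get $H(u,v,t)=H_0(t)+K(u,v)$ with $H_0'(t)=\dot H(t)$, and integrating the pressure equation in $r$ gives $p=p_0(t)-\dot H(t)\,r$; the first two equations then force $p_0$ to be constant. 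This is exactly the solution of Example \ref{ex:pln}, with the splitting $V=(H(t)+K(u,v))\,\partial/\partial r$ and $p=p_0-\dot H r$.

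The converse — that this velocity and pressure genuinely solve (\ref{e:euler1})–(\ref{e:euler2}) — is the direct verification already recorded in Example \ref{ex:pln}, so nothing further is needed there. The only genuine subtlety, and the step I expect to require the most care, is the reduction of a general rank-zero oriented line congruence to the normalized form $\xi=0$, $\eta=(u+iv)/2$: one must check that an arbitrary parameterization $(u,v)\mapsto(\xi_0,\eta(u,v))$ with fixed $\xi_0$ can, after a rotation sending $\xi_0$ to the north pole and a reparameterization of the plane, be brought to this form, so that no generality is lost. This is routine given the identification ${\mathbb L}({\mathbb R}^3)=TS^2$ and the explicit map (\ref{e:Phi}), but it is the place where one must be explicit rather than wave hands, since unlike the higher-rank cases there is no differential equation left to extract structure from — the whole content is in the normalization plus the near-trivial integration above.
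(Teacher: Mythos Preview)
Your proposal is correct and follows essentially the same route as the paper: normalize to $\xi_0=0$, $\eta=(u+iv)/2$, use the divergence-free form $V=H(u,v,t)\,\partial_r$, and read off the three Euler components to obtain $p=p(r,t)$ and $\partial_t H=-\partial_r p$, whence the splitting $H=H_0(t)+K(u,v)$ and $p=p_0-\dot H\,r$. The only slip is the claim that the $u,v$-equations force $p_0$ to be constant in $t$ --- they do not, but since pressure in (\ref{e:euler1}) is determined only up to an additive function of time this is immaterial and the paper likewise just takes $p_0$ constant without comment.
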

\begin{proof}
A rank zero oriented line congruence is a 2-parameter family of parallel lines and thus forms the set of normals to a flat plane. By a rotation we can set the direction of the oriented lines to be the positive $x^3-$direction and so $\xi=0$. Parameterize the plane by $(x^1,x^2)=(u,v)$ and so $\eta=(u+iv)/2$. 

We know from equation (\ref{e:rk0dv_free}) that, as a time varying divergence-free vector field tangent to the oriented line congruence, the fluid velocity is
\[
V=F(u,v,t)\frac{\partial}{\partial r},
\]
for some function $F$, where $r=x^3$.

Two of the Euler equations now say that $p$ is independent of $u$ and $v$, while the third says that
\[
\frac{\partial F}{\partial t}=-\frac{\partial p}{\partial r}.
\]
We conclude that $F=H(t)+K(u,v)$ for some functions $H$ and $K$ and 
\[
p=p_0-\frac{\partial H}{\partial t}r,
\]
as claimed.
\end{proof}

\end{document}